\numberwithin{equation}{section}
\theoremstyle{plain}
\newtheorem{theorem}[equation]{Theorem}
\newtheorem{proposition}[equation]{Proposition}
\newtheorem{lemma}[equation]{Lemma}
\theoremstyle{remark}
\theoremstyle{definition}
\newtheorem{definition}[equation]{Definition}
\newtheorem{example}[equation]{Example}
\DeclareMathOperator{\Spec}{Spec}
\title{The dual complex of a semi-log canonical surface}
\author{Morgan V Brown}
\address{Department of Mathematics, University of Miami, Coral Gables, FL 33146 USA}
\email{mvbrown@math.miami.edu}
\begin{document}

\begin{abstract}
Semi-log canonical varieties are a higher-dimensional analogue of stable curves. They are the varieties appearing as the boundary $\Delta$ of a log canonical pair $(X,\Delta)$, and also appear as limits of canonically polarized varieties in moduli theory. For certain three-fold pairs $(X,\Delta)$ we show how to compute the PL homeomorphism type of the dual complex of a dlt minimal model directly from the normalization data of $\Delta$. 

\end{abstract}
\maketitle
\section{Introduction}
Let $X$ be a smooth variety over $\mathbb{C}$, and let $\Delta$ be a simple normal crossings (snc) divisor. We associate to the pair $(X,\Delta)$ the cell complex $\mathcal{D}(\Delta)$, which is the divisor's dual intersection complex. These complexes appear ubiquitously in algebraic geometry with connections to Hodge theory, moduli theory, birational geometry, non-archimedean geometry, and the mirror symmetry program.

Pairs and dual complexes arise naturally in the context of degenerations. Consider a one parameter family of projective varieties $X$ over a punctured curve $S \subset \bar{S}$. Even if $X$ is smooth over $S$, there may be no way to complete the family to a smooth family over $\bar{S}$. But the existence of log resolutions guarantees a compactification $\bar{X}$ over $\bar{S}$ such that the pair $(\bar{X},X_0)$ has simple normal crossings. Our motivation is the natural question of how the dual complex $\mathcal{D}(X_0)$ encodes the geometry of the generic fiber. For example, if $X$ is a family of smooth genus $g$ curves, the dual complex of the special fiber is either a point or a graph of genus at most $g$. For K3 surfaces, there are three possible types, distinguished by the monodromy action on the cohomology groups of the variety. If $\bar{X}$ is a degeneration of K3 surfaces with trivial relative canonical bundle and reduced special fiber, the dual complex $\mathcal{D}(X_0)$ is either a point, an interval, or a $2$-sphere \cite{Kulikov}.

As different compactifications of the same family are related by a birational map, recent papers have employed techniques from minimal model theory to study the dual complex. In this setting it is natural to extend the definitions to mildly singular pairs such as dlt pairs as in \cite{deFernexKollarXu2012} as long as we only consider divisors of coefficient $1$. For a dlt pair $(X,\Delta)$, de Fernex, Koll{\'a}r, and Xu \cite{deFernexKollarXu2012} show that certain operations of the minimal model program induce simple homotopy equivalences on the complex $\mathcal{D}(\Delta^{=1})$. As a result they are able to show that if $X \to S$ is a one-parameter degeneration of rationally connected varieties with special fiber $X_0$, if the pair $(X,X_0)$ is dlt, then the dual complex $\mathcal{D}(X_0)$ is contractible. They also show that that the PL homeomorphism type of $\mathcal{D}(\Delta^{=1})$ is preserved by log crepant maps. Koll{\'a}r and Xu \cite{KollarXu} use these techniques to study the special fibers of degenerations of Calabi-Yau varieties. They show that if the dual complex has real dimension $n$ equal to the complex dimension of a fiber, then for small $n$ ($\leq 3$ in the dlt case, $\leq 4$ in the snc case), the dual complex is a sphere.

To understand degenerations of varieties of general type, we include more singular pairs called \emph{log canonical} pairs. Now consider a $1$-parameter degeneration $X \to S$, where the generic fibers of $X$ are smooth surfaces with ample canonical bundle. Then because the KSBA moduli space is proper \cite{Alexeev, KollarShepherd-Barron} possibly after a finite base change, we may replace $X$ by $\bar{X}$, such that $X_0$ is reduced, $(\bar{X}, X_0)$ is a log canonical pair, and the relative canonical bundle $K_{\bar{X}/\bar{S}}$ is ample.

The special fiber $X_0$ itself does not always have log canonical singularities because it is not necessarily normal. As it may have multiple components, we will consider a complex variety to be a finite type reduced scheme over $\mathbb{C}$ which may have multiple components. The singular variety $X_0$ is a \emph{semi-log canonical} variety, which guarantees that each irreducible component has at worst nodal singularities in codimension one, though there may be more complicated self intersections in higher codimension.

Any log canonical pair $(X,\Delta)$ admits a \emph{minimal dlt model}, $(X', \Delta')$. As any two choices of dlt minimal model give dual complexes which are PL homeomorphic, we associate to the pair $(X,\Delta)$ the PL homeomorphism type $\mathcal{DMR}(X,\Delta)$ of the complex $\mathcal{D}(\Delta'^{=1})$. The goal of the present work is to compute this complex for a threefold pair directly from the geometry of the divisor $\Delta$. This has been accomplished in arbitrary dimension when the boundary divisor has normal crossings in the context of tropicalization of the moduli space of curves \cite{ACP2015,2018arXiv180510186C}

In our setting $\Delta$ may have more complicated self intersections in codimension $2$. Suppose every element of $\Delta$ has coefficient $1$, $\Delta$ is Cartier, and the normalization of $\Delta$ with its conductor forms a dlt pair. Under these hypotheses our main result shows that $\mathcal{D}(\Delta')$ is PL homeomorphic to a complex $\mathcal{C}(\Delta)$ depending only on the semi-log canonical variety $\Delta$:

\begin{theorem}\label{dmr}
Let $(X,\Delta)$ be a threefold log canonical pair such that every coefficient of $\Delta$ is $1$, $\Delta$ is Cartier, and every lc center of the pair is contained in $\Delta$. Choose a dlt minimal model $(X',\Delta')$ over $(X,\Delta)$. If $(D,B)$, the normalization of $\Delta$ with its conductor, is a dlt surface pair, then the complexes $\mathcal{D}(\Delta')$ and $\mathcal{C}(\Delta)$ are PL homeomorphic.
\end{theorem}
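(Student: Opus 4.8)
The plan is to pass through adjunction and reduce the statement to an intrinsic computation on the slc surface $\Delta$ itself, using the dimension-two case of the de Fernex--Koll\'ar--Xu invariance theorems as the engine, and invoking only crepant (hence PL-preserving) operations rather than the homotopy-only divisorial contractions. The first observation I would record is that $\mathcal{D}(\Delta')$ is really an invariant of $\Delta'$ as an abstract scheme: its vertices are the irreducible components of $\Delta'$, its edges the irreducible double curves along which two components are glued, and its two-cells the triple points, and all of this data is visible on the normalization $\Delta'^{\nu}=\bigsqcup_a \Delta'_a$ together with the gluing involution on the conductor. Thus $\mathcal{D}(\Delta')$ is computed by exactly the recipe that defines $\mathcal{C}$, applied to $\Delta'$ in place of $\Delta$, and the theorem will follow once I show that $\mathcal{C}(\Delta')$ and $\mathcal{C}(\Delta)$ agree.

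Next I would set up the comparison between $\Delta'$ and $\Delta$. Because $(X',\Delta')\to (X,\Delta)$ is crepant and $\Delta$ is Cartier, adjunction $K_{\Delta'}=(K_{X'}+\Delta')|_{\Delta'}$ realizes $\Delta'$ as a semi-log canonical (indeed semi-dlt) surface that is crepant birational to $\Delta$; the Cartier hypothesis is what guarantees that adjunction produces a reduced, integral conductor equal to the different, with no fractional contributions. The hypothesis that every lc center is contained in $\Delta$ is what makes the whole combinatorial content descend to $\Delta$: every codimension-two and codimension-three lc center of $(X',\Delta')$, hence every edge and two-cell of $\mathcal{D}(\Delta')$, lies over the non-normal locus of $\Delta$, i.e.\ over the image of the conductor $B$. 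On the normalizations, each pair $(\Delta'_a,\mathrm{Diff}_{\Delta'_a})$ is then a dlt surface pair crepant birational to $(D_a,B_a)$, where $B_a$ is the part of the conductor carried by the component $D_a$.

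I would then assemble the global statement. By the dimension-two instance of the invariance theorem---crepant birational dlt surface pairs have PL homeomorphic dual complexes---the subcomplex of $\mathcal{C}(\Delta')$ associated to $(\Delta'_a,\mathrm{Diff}_{\Delta'_a})$ is PL homeomorphic to the subcomplex of $\mathcal{C}(\Delta)$ associated to $(D_a,B_a)$. The essential point is then that the dlt modification $X'\to X$ \emph{resolves} the conductor gluing: a component $\Delta_a$ of $\Delta$ that is non-normal along a double curve $\Gamma$ becomes, upstairs, its normalization $D_a$ with the two branches over $\Gamma$ separated and joined instead to an exceptional divisor, so that the identification recorded downstairs by the gluing involution $\tau$ on the normalization $\bar B$ of the conductor reappears upstairs as honest crossings in $\Delta'$. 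I would check that these local resolutions can be made compatibly along the double and triple loci, so that the attaching maps of the cells of $\mathcal{D}(\Delta')$ are governed by the same involution $\tau$ that defines $\mathcal{C}(\Delta)$, yielding a PL homeomorphism $\mathcal{C}(\Delta')\cong \mathcal{C}(\Delta)$ and hence the theorem.

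The main obstacle I anticipate is precisely this gluing-compatibility step, which reflects Koll\'ar's gluing theory for slc varieties rather than a formal consequence of the surface case. Running the threefold MMP and restricting to $\Delta'$ need not commute with running intrinsic surface MMPs on the components $D_a$ and then regluing along $\tau$; I would have to verify that the crepant modifications of the components can be chosen $\tau$-equivariantly, or, failing that, that the PL homeomorphism type of the glued complex is independent of the choices. A related technical point, inherited from the fact that dlt pairs are only generically snc, is to control the codimension-three strata over the triple points, where the pair may fail to be snc; here I expect to invoke a further birational modification making $(X',\Delta')$ snc along these strata without changing the PL type, again using the invariance results.
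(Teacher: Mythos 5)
Your opening reduction is sound: for a dlt pair $(X',\Delta')$, the lc centers are exactly the strata of $\Delta'^{=1}$, so $\mathcal{D}(\Delta')$ is indeed intrinsic to the non-normal surface $\Delta'$ and coincides with the paper's recipe $\mathcal{C}$ applied to $\Delta'$. The fatal problem is the next step. You treat the comparison of $\Delta'$ with $\Delta$ as a componentwise crepant birational correspondence of dlt surface pairs, $(\Delta'_a,\mathrm{Diff}_{\Delta'_a})\leftrightarrow(D_a,B_a)$, and propose the two-dimensional de Fernex--Koll\'ar--Xu invariance theorem as the engine. But $\Delta'$ is the strict transform of $\Delta$ \emph{plus every exceptional divisor of} $f\colon X'\to X$, and under the hypotheses of the theorem these exceptional divisors lie over the $0$- and $1$-dimensional lc centers of $(X,\Delta)$: they are whole components of $\Delta'$ that contract to curves or points of $\Delta$. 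For such a component there is no surface pair $(D_a,B_a)$ downstairs to compare with, so crepant invariance in dimension two says nothing about it; yet these components contribute vertices, edges, and $2$-cells to $\mathcal{D}(\Delta')$ (e.g.\ in the paper's Veronese-cone example, the entire interior of the $2$-disk comes from divisors over the origin). Your proposal has no mechanism for determining how these pieces assemble, and that assembly is precisely the content of the theorem: the paper proves (i) that the strata lying over a $1$-dimensional center $\Gamma$ form a segment or a circle (Lemma \ref{graphs}, which needs the Shokurov--Koll\'ar connectedness theorem, not invariance), and (ii) that the union of strata lying over a $0$-dimensional center $Z$ is a closed PL $2$-disk glued along the path or cycle $\sigma_Z$ (the propositions on $\mathcal{B}_Z$ and $\mathcal{A}_Z$, which combine contractibility from \cite{deFernexKollarXu2012} with a manifold-with-boundary argument and the Koll\'ar--Xu classification of one-dimensional log Calabi--Yau dual complexes).

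Relatedly, the obstacle you flag at the end --- $\tau$-equivariance of surface MMPs on the components $D_a$ --- is not the real difficulty, and resolving it would not close the gap. Even with perfectly compatible choices on the strict-transform components, you would still have shown nothing about the subcomplex of $\mathcal{D}(\Delta')$ spanned by the $f$-exceptional components; in effect your reduction ``show $\mathcal{C}(\Delta')\cong\mathcal{C}(\Delta)$'' is a restatement of the theorem rather than a step toward it, since all of the exceptional geometry is hidden in $\mathcal{C}(\Delta')$ and absent from $\mathcal{C}(\Delta)$. A correct proof along your lines would have to add exactly the paper's two ingredients: a connectedness-type argument controlling the chain of exceptional divisors over each $1$-dimensional center, and a disk theorem for the locus over each $0$-dimensional center.
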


In section \ref{slc} we review singularities of the minimal model program including semi-log canonical singularities, following Koll{\'a}r's book \cite{KollarBook}. Section \ref{dualcx} introduces necessary background from combinatorial topology. The main result with its proof is presented in section \ref{mainsection}. In the final section we apply Theorem 1.1 to a particular semi-log canonical surface investigated by Franciosi, Pardini, and Rollenske \cite{FPR2015a,FPR2015b,FPR2017}, which appears as a stable limit of I-surfaces, which are surfaces of general type with $K^2=1, p_g=2, q=0$.

This project grew out of a question posed to the author by Phillip Griffiths. The author is also grateful to Mark Green and Ken Baker for helpful discussions. The author's work was supported by Simons Foundation Collaboration Grant 524003.



\section{Semi-log canonical varieties} \label{slc}

We recall a few definitions from the minimal model program.

\begin{definition}\cite{KollarMori}
We say that a pair $(X,\Delta)$ is log canonical if 
\begin{itemize}
\item $X$ is normal.
\item $K_{X}+\Delta$ is $\mathbb{Q}$-Cartier.
\item For any log resolution $f \colon X' \to X$, if we write $K_X'+\Delta'= f^*(K_X+\Delta)+a_i E_i$, where $\Delta'$ is the reduced exceptional divisor plus the strict transform of $\Delta$ with coefficient $1$, each $a_i\geq 0$.
\end{itemize}
\end{definition}

In fact it is sufficient to check the last condition for a single log resolution. The coefficients $a_i$ are called log discrepancies and they provide a measure of the singularities of the pair. In particular we say that a closed subset $Z \subset X$ is a \emph{log canonical center} of $(X,\Delta)$ if it is the image of an $E_i$ with $a_i=0$. A pair $(X,\Delta)$ is called \emph{divisorially log terminal}, or \emph{dlt}, if it is log canonical and is snc in a neighborhood of any log canonical center. 

We say that $(X',\Delta'), f \colon X' \to X$ is a dlt minimal model of the log canonical pair $(X,\Delta)$ if $(X', \Delta')$ is a dlt pair, $\Delta$ is the reduced exceptional divisor of $f$ plus the strict transform of $\Delta$, and every exceptional divisor of $f$ has log discrepancy $0$. By a theorem of Hacon \cite[Theorem 3.1]{KollarKovacs} every log canonical pair has a dlt minimal model. Dlt minimal models are generally not unique.

The condition that $(X,\Delta)$ is log canonical also constrains the singularities of $\Delta$. We cannot guarantee that $\Delta$ is log canonical because $\Delta$ often fails to be normal. This is the motivation for the notion of a semi-log canonical variety. Suppose every component of $\Delta$ has coefficient $1$. Let $f \colon D \to X$ be the normalization map of $\Delta$. There is a canonically defined divisor $M$ on $D$, called the \emph{different}, such that $K_D+M= f^*(K_X+\Delta)$ as $\mathbb{Q}$-divisors \cite[Prop 4.5]{KollarBook}. Moreover if $(X,\Delta)$ is log canonical so is $(D,M)$ \cite[Lemma 4.8]{KollarBook}.

We suppose $\Delta$ is a variety which satisfies $S_2$ and has at most nodal singularities in codimension $1$. Write $D$ for the normalization of $\Delta$, and $C$ for the conductor, which is the divisor on $D$ which is the preimage of the double locus of $\Delta$. The normalization $D \to \Delta$ identifies points of $B$, via an involution $\iota$ on the normalization $B^\nu$ of $B$.

\begin{definition}(see \cite[Definition-Lemma 5.10]{KollarBook})
A variety $\Delta$ as above is \emph{semi-log canonical} if 
\begin{enumerate}
\item The canonical sheaf of $\Delta$ is $\mathbb{Q}$-Cartier
\item The pair $(D,B)$ is log canonical.
\end{enumerate}

\end{definition}

The canonical sheaf of the non-normal variety $\Delta$ is understood as follows. Let $j \colon \Delta_0 \to \Delta$ be the locus where $\Delta_0$ is locally a complete intersection. Then $\omega_0$, the canonical sheaf of $\Delta_0$ is a line bundle, and the complement of $\Delta_0$ has codimension at least $2$ in $\Delta$ because the codimension $1$ singularities are nodal. The canonical sheaf is defined to be $j_*\omega_0$. Surprisingly, if the second condition of Definition \ref{slc} holds for a quasiprojective variety, the first holds if and only if $\iota$ preserves the different of $B^\nu$ \cite[Theorem 5.38]{KollarBook}. 

If $\Delta$ is Cartier in codimension $3$, then the different and the conductor agree, and thus whenever $(X,\Delta)$ is log canonical, $\Delta$ is slc. We will only be concerned with the case where $\Delta$ is a surface and $(D,B)$ is dlt. In particular $\iota$ must preserve the log canonical centers of $(D,B)$.

Semi-log canonical varieties are the higher dimensional analogue of stable curves. The limit of a family of canonically polarized surfaces in the KSBA space is a semi-log canonical surface.

\begin{example}\label{doubleumbrella}
Let $S$ be the affine surface with coordinate ring $\mathbb{C}[s^2 ,st, t^2, s^2t, st^2]$. The normalization of $S$ is the affine plane $\mathbb{A}^2$, with conductor consisting of the two coordinate lines $V(s), V(t)$. The involution on each curve is given by $s \to -s$, $t \to -t$ on each curve. Hence the origin is fixed by the involution, and we see that $S$ is semi-log canonical. We may extract a divisor at the origin to produce $S'$, which is two copies of the Whitney umbrella glued together.

While $S$ is not Gorenstein, the canonical divisor of $S$ is $2$-torsion.

\end{example}

\section{Dual Complexes}\label{dualcx}

The basic building block of the dual complex of a pair is the standard $n$-simplex, 

\[
\Sigma_n = \{(x_0, \ldots x_n) \in \mathbb{R}^{n+1} | x_i \geq 0, x_0 + \ldots x_n =1\}.
\]
The boundary $\partial\Sigma_n$ consists of the $n-1$-simplices $x_i =0$ for $i=0 \ldots n$.

Following \cite{deFernexKollarXu2012} and \cite{Hatcher}, we define a (finite) regular cell complex inductively as follows: A $0$-dimensional cell complex consists of finitely many points called vertices. Given a $k$-dimensional regular cell complex $\mathcal{C}_k$, we may attach finitely many $k+1$ simplices via embedding their boundaries in $\mathcal{C}_k$. At each stage the $j$-cells are the images of the $j$ dimensional simplices. If the intersection of any two simplices is a (possibly empty) face of both then we say that the cell complex is a simplicial complex.

Let $(X,\Delta)$ be a dlt pair. The dual complex $\mathcal{D}(\Delta^{=1})$ is the regular cell complex whose vertices are components of $\Delta^{=1}$, and whose $j$-cells are irreducible components of intersections of $j+1$ many components of $\Delta^{=1}$, which are the log canonical centers of $(X,\Delta)$ of codimension $j+1$. The attachment maps arise from the fact that every log canonical center $Z$ of codimension $j+1$ is contained in $j+1$ many lc centers of codimension $j$.

By \cite[Prop 11]{deFernexKollarXu2012}, if $(X,\Delta)$ and $(X',\Delta')$ are birational dlt pairs which satisfy $f^*(K_X+\Delta)=f'^*(K_{X'}+\Delta')$ for a common resolution, then the complexes $\mathcal{D}(\Delta^{=1})$ and $\mathcal{D}(\Delta'^{=1})$ are PL homeomorphic. As a result we can define a PL space for any pair $(X,\Delta)$ by taking $\mathcal{D}(\Delta'^{=1})$ for a dlt minimal model $(X,\Delta)$. Following the notation in \cite{deFernexKollarXu2012}, we call this $\mathcal{DMR}(X,\Delta)$.

It is often more convenient to work with simplicial complexes than regular cell complexes. We introduce the \emph{barycentric subdivision}, which replaces a complex $\mathcal{C}$ by its flag complex $\mathcal{B}(\mathcal{C})$. The vertices of $\mathcal{B}(\mathcal{C})$ correspond to the simplices of $\mathcal{C}$, and the $j$-simplices of $\mathcal{B}(\mathcal{C})$ are length $j$ chains of cells $C_1 \subsetneq C_2 \ldots \subsetneq C_j$. A complex and its barycentric subdivision are PL homeomorphic, so for a dlt pair $(X,\Delta)$ we could have defined $\mathcal{D}(\Delta^{=1})$ to be the flag complex of the lc centers of $(X,\Delta)$, considered as a partially ordered set.

While it would be nice to define $\mathcal{D}(X,\Delta)$ for log canonical pairs the same way, this gives the wrong topological type in certain cases: Suppose $X$ is a smooth threefold, and $\Delta$ a surface such that $(X,\Delta)$ is log canonical, and the only log canonical centers of $\Delta$ are $\Delta$ itself and a smooth curve. Then the flag complex is homeomorphic to a closed interval. Consider the normalization with the conductor, $(D, B)$. The curve has either one or two components. We will see (Lemma \ref{graphs}) that if $B$ has one component then $\mathcal{D}(X,\Delta)$ is an interval, but if $B$ has two components $\mathcal{D}(\Delta)$ is a circle.

There are some subtleties to working in the PL category. Namely, two simplicial complexes may be homeomorphic as topological spaces but not PL homeomorphic. The standard example is to take a non simply connected $\mathbb{Z}$-homology $3$-sphere $X$, such as the Poincar{\'e} sphere, and let $Y$ be the double suspension over it. As a topological space, $Y$ is homeomorphic to $S^5$, but it is not PL homeomorphic to the boundary of the standard $6$ simplex \cite{Cannon}. We work in dimension $2$, and rely on the fact that any simplicial complex which is homeomorphic to the $2$-disk is also PL homeomorphic to the standard $2$-simplex.

Finally, we introduce the notion of a half-edge graph, which will be useful for comparing various cell complexes of dimension $1$.

\begin{definition}
A half edge graph consists of a topological space $U$, a finite set of distinct points $p_i \in U$ and finitely many line segments 
$L_j \colon [0,1] \to U$ such that 
\begin{enumerate}
\item $U$ is the union of the images of the $L_i$.
\item For each $L_j$, $L_j(0)$ is one of the $p_i$, and for $x>0$, $L_j(x) \neq p_i$ for any $i$.
\item For each $L_j$ there is at most one $k \neq j$ such that $L_j((0,1]) \cap L_k((0,1])$ is nonempty, and if so the intersection consists of the single point $L_j(1)=L_k(1)$.
\end{enumerate}

We say that two half edge graphs $(U, p_i, L_j)$ and $(U', p_i', L_j')$ are isomorphic if there is a homeomorphism $f\colon U \to U'$ which induces a bijection on points $p_i$ and half edges $L_j$, and $f$ restricts to a homeomorphism between identified half edges. The positive part of a half edge is the image of the interval $(0,1]$.

\end{definition}

\section{Main Result}\label{mainsection}
Let $(X,\Delta)$ be a threefold log canonical pair such that every coefficient of $\Delta$ is $1$, $\Delta$ is Cartier, and every lc center of the pair is contained in $\Delta$. Suppose also that for each component $D_i$ of the normalization of $\Delta$, the pair $(D_i,B_i)$ is dlt, where $B_i$ is the conductor. We will construct a simplicial complex $\mathcal{C}(\Delta)$ from the normalization data.

For every component $\Delta_i$ of $\Delta$, let $(D_i, B_i)$ be the normalization with its conductor. Set $\mathcal{G}_i$ to be the first barycentric subdivision of the dual graph, and $\mathcal{C}_i$ the cone over $\mathcal{G}_i$. Set $\mathcal{C}=\coprod \mathcal{C}_i$ and $\mathcal{G} = \coprod \mathcal{G}_i$, considered as a subcomplex of $\mathcal{C}$. We give an equivalence relation $\sim$ on $\mathcal{G}$ as follows. Two vertices of $\mathcal{G}$ are identified if their strata have the same image in $X$. Two edges of $\mathcal{G}$ are identified if and only if the endpoints of that edge are identified. As $\mathcal{G} \subset \mathcal{C}$ the equivalence relation extends to $\mathcal{C}$.

\begin{definition}
$\mathcal{C}(\Delta) = \mathcal{C}/\sim$.
\end{definition}

The vertices of $\mathcal{C}(\Delta)$ correspond naturally to the lc centers of $(X,\Delta)$, and each $k$-simplex of $\mathcal{C}(\Delta)$ can be identified with some length $k+1$ chain of strata in the normalization. We denote by $\mathcal{C}_1(\Delta)$ the subcomplex consisting of the simplices which involve only strata of dimension at least $1$. This complex naturally has the structure of a half edge graph as follows: The vertices correspond to components of $\Delta$. For every $1$-dimensional lc center $\Gamma$ there is either one or two half edges, depending on the normalization data.
\begin{enumerate}
\item If $\Gamma$ is the intersection of two components $\Delta_1, \Delta_2$, it has two preimages $C_{j}, C_{j'}$ in the normalization, which give two half edges connecting the vertices $D_1$ and $D_2$.

\item If $\Gamma$ is the self intersection of $\Delta_1$, and two irreducible components of the conductor map to $\Gamma$, then there are two half edges corresponding to each component which give an edge connecting $D_1$ to itself.

\item If $\Gamma$ is the self intersection of $\Delta_1$, and the normalization restricts to a double cover of an irreducible curve $B$ onto $\Gamma$, we have a half edge extending from $D_1$, terminating at $C_j$.

\end{enumerate}

These are the only three possible cases. In each case we orient each half edge so that $0$ corresponds to the divisor and $1$ corresponds to the curve. To obtain the rest of $\mathcal{C}(\Delta)$ from the half edge graph $\mathcal{C}_1(\Delta)$ we attach simplices involving the preimages of the $0$-dimensional lc centers. We will see that for each $0$-dimensional center, we attach a closed disk to $\mathcal{C}_1(\Delta)$. The gluing data consists of either a path or cycle in $\mathcal{C}_1(\Delta)$.

Choose a $0$-dimensional lc center $Z$. We associate a graph to $Z$. Let $Q_1 \ldots Q_m$ be the preimages of $Z$ in $D$. Each $Q_k$ is contained in a unique component $D_i$, and there are exactly two curve strata $C_{j}, C_{j'}$ containing $Q_k$. Let $\mathcal{G}(Z)$ be the half edge graph with vertices $Q_j$, and a half edge for every inclusion $Q_k \subset C_j$. Two half edges $Q_k \subset C_j$ and $Q_{k'} \subset C_{j'}$ are glued at $1$ if $\iota$ identifies $C_{j}$ and $C_{j'}$ and $\iota|_{B_{j}}(Q_j)=Q_{j'}$.

This graph is connected because the identification of the $Q_k$ is induced by $\iota$. Each vertex has degree at most $2$ so $\mathcal{G}(Z)$ is homeomorphic to either $S^1$ or a closed interval.


Then we have a map 
\[
\sigma_{Z} \colon \mathcal{G}(Z) \to \mathcal{C}_1(X,\Delta)
\]
which sends the half edge $(Q_k,C_j)$ to the half edge $(D_i, C_j)$, where $D_i$ is the normalization of the divisor containing $Q_k$.

\begin{proposition}\label{attachment}The complex $\mathcal{C}(X,\Delta)$ is obtained from $\mathcal{C}_1(X,\Delta)$ by gluing the cone over $\mathcal{G}(Z)$ along the map $\sigma_Z$, for each $0$-dimensional lc center $Z$.
\end{proposition}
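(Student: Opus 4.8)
The plan is to unwind the construction of $\mathcal{C}(\Delta) = \mathcal{C}/\!\sim$ and show that the identification can be carried out in two stages: first along the $1$-dimensional strata to produce $\mathcal{C}_1(\Delta)$, then along the $0$-dimensional strata. I would begin by recalling that $\mathcal{C} = \coprod_i \mathcal{C}_i$, where $\mathcal{C}_i$ is the cone over the barycentrically-subdivided dual graph $\mathcal{G}_i$ of $(D_i,B_i)$. Each simplex of $\mathcal{C}_i$ corresponds to a chain of strata in $D_i$ with a chosen cone point (the vertex $D_i$ itself). The key observation is that the equivalence relation $\sim$ only identifies \emph{boundary} strata coming from curves and points of the conductor; it never identifies the cone vertices $D_i$ corresponding to distinct divisor components. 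So I would first verify that the quotient $\mathcal{G}/\!\sim$ is precisely the half-edge graph $\mathcal{C}_1(\Delta)$ described in the three cases (intersection of two components, self-intersection with two conductor components, self-intersection as a double cover), checking that each case matches the claimed half-edge picture by tracking how $\iota$ acts on the curve strata $C_j$.

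Next I would handle the $0$-dimensional centers. Fix such a center $Z$ with preimages $Q_1,\dots,Q_m$ in $D$. The simplices of $\mathcal{C}$ that involve a point stratum over $Z$ are exactly those whose final (top-dimensional) vertex is some $Q_k$; since each $Q_k$ lies in a unique $D_i$ and in exactly two curve strata $C_j,C_{j'}$, the fan of simplices at $Q_k$ inside $\mathcal{C}_i$ is the cone (with apex $D_i$) over the two half-edges $(Q_k,C_j)$ and $(Q_k,C_{j'})$. I would argue that assembling these local fans over all $k$, before imposing $\sim$ on the point strata, gives exactly the cone over the half-edge graph $\mathcal{G}(Z)$, with the map $\sigma_Z$ recording how each half-edge $(Q_k,C_j)$ sits over $(D_i,C_j)$ in $\mathcal{C}_1(\Delta)$. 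The identifications among the $Q_k$ induced by $\iota$ are by definition the gluings defining $\mathcal{G}(Z)$, so imposing $\sim$ on the point strata is the same as forming the cone over $\mathcal{G}(Z)$ and attaching it via $\sigma_Z$.

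I would then assemble the two steps: the simplices of $\mathcal{C}$ partition into those lying entirely in $\mathcal{G}$ (the curve-and-divisor strata, giving $\mathcal{C}_1(\Delta)$ after quotient) and those involving a point stratum (giving, for each $Z$, a cone over $\mathcal{G}(Z)$). Because distinct $0$-dimensional centers $Z,Z'$ have disjoint preimage sets $\{Q_k\}$, the cones are attached independently and only meet $\mathcal{C}_1(\Delta)$ along their boundary, so the total space is $\mathcal{C}_1(\Delta)$ with a cone over each $\mathcal{G}(Z)$ glued along $\sigma_Z$, as claimed. Finally I would note that since each $\mathcal{G}(Z)$ is connected with all vertices of degree at most $2$, it is an interval or a circle, so the cone is a disk, consistent with the ``closed disk'' description preceding the statement.

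The main obstacle I expect is bookkeeping rather than any deep point: one must be careful that the cone-point structure of each $\mathcal{C}_i$ is compatible across the quotient, i.e.\ that the cone apex $D_i$ really is the unique vertex $0$-end of every half-edge and that the barycentric subdivision does not introduce spurious identifications. The most delicate case is case (3), where a single conductor curve $B$ double-covers $\Gamma$ and $\iota$ may fix a point $Q_k$; here I would check that the self-identification of a half-edge at its endpoint produces the correct interval-versus-circle dichotomy for $\mathcal{G}(Z)$ and that $\sigma_Z$ is well-defined on the quotient. Verifying that $\sigma_Z$ agrees with the restriction of the quotient map $\mathcal{C}\to\mathcal{C}(\Delta)$ to each cone is then a direct comparison of the two descriptions of how point strata are glued.
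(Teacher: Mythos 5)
Your proposal is correct and takes essentially the same approach as the paper: the paper's proof likewise views each $2$-simplex (chain $Q_k \subset C_j \subset D_i$) as the cone over a half edge of $\mathcal{G}(Z)$, and checks that two such simplices meet along the $(Q,D)$ edge exactly when the half edges are joined at $Q_k$, and along the $(Q,C)$ edge exactly when $\iota$ glues them at $1$ --- which is your ``local fan plus $\iota$-identification'' bookkeeping in slightly different packaging. The extra points you flag (disjointness of preimages of distinct centers, the interval-versus-circle dichotomy for $\mathcal{G}(Z)$, and the half-edge structure of $\mathcal{C}_1(\Delta)$) appear in the paper's discussion preceding the proposition rather than in its proof.
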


\begin{proof}
We obtain $\mathcal{C}(X,\Delta)$ from $\mathcal{C}(X,\Delta)$ by attaching all of the $2$-simplices, which correspond to chains of strata $Q_k \subset C_j \subset D_i$ in the normalization. We view such a simplex as the cone over the half edge $(Q, C)$ in the graph $\mathcal{G}'(Z)$. Two simplices $Q_{k} \subset C_{j} \subset D_{i}$ and $Q_{k'} \subset C_{j'} \subset D_{k'}$ intersect along the $(Q,D)$ edge if and only if $Q_{k}=Q_{k'}$ and $D_i=D_{i'}$, which is the case if $C_j$ and $C_{j'}$ are the two curves in $D_i$ containing $Q_k$. This occurs exactly when the two half edges $Q_k \subset C_j, Q_{k'} \subset C_{j'}$ are joined at $Q_k$ in $\mathcal{G}'(Z)$. 

The two simplices intersect along the $(Q,C)$ edge if and only if the involution $\iota$ identifies $C_{j}$ and $C_{j'}$ and $\iota|_{C_j}(Q_k)=Q_{k'}$. In this case the two half edges $(Q_k, C_j)$ and $(Q_{k'}, C_{j'})$ of $\Gamma'(P)$ are attached by the identification of $C_j$ and $C_{j'}$ via the involution.

We have shown that attaching all simplices corresponding to chains containing a preimage of $P$ means attaching the cone over $\mathcal{G}(Z)$ to $\mathcal{C}_1(\Delta)$. The attaching map identifies each half edge $(Q_k,C_j)$, with the half edge $(D_i,C_j)$ where $D_i$ is the unique divisor containing $Q_k$. This induces the map $\sigma_Z$ and the proposition follows.

\end{proof}

Our main result, Theorem \ref{dmr}, is that under these hypotheses the complex $\mathcal{C}(\Delta)$, which depends only on $\Delta$, is PL homemorphic to the dual complex of a (and therefore any) minimal dlt model of $(X,\Delta)$. Our strategy of proof is to decompose $\mathcal{D}(\Delta')$ using the geometry of $X$, as we did for $\mathcal{C}(\Delta)$. Each point in the dual complex $\mathcal{D}(\Delta')$ can be assigned uniquely up to scaling valuation on the function field $K(X)$. Every such valuation is centered on one of the lc centers of $X$, and two valuations belonging to the same open simplex always have the same center on $X$ because $f \colon X' \to X$ is a morphism.

\begin{definition}
Let $W$ be a subset of the lc centers of $(X,\Delta)$. Define $\mathcal{D}_{W}(X,\Delta)$ to be the union of open simplices corresponding to valuations centered along any lc center belonging to $W$.
\end{definition}

As $\mathcal{D}_{W}(X,\Delta)$ is determined by the description of $\mathcal{D}(\Delta)$ in terms of valuations, it is independent of the choice of dlt minimal model. The case where $W$ consists of all lc centers of dimension at least $1$ is especially important, we will denote this subcomplex by $\mathcal{D}_1(X,\Delta)$.

\begin{lemma}\label{graphs}
Let $(X,\Delta)$ be a threefold slc pair such that every coefficient of $\Delta$ is $1$ Let $W$ be the subset of lc centers of $(X,\Delta)$ consisting of a one dimensional lc center $\Gamma$ along with all components of $\Delta$ containing $\Gamma$.
The only possibilities for $T_\Gamma=\mathcal{D}_W(X,\Delta)$ are the following:

\begin{enumerate}
\item There are exactly two components $\Delta_1$ and $\Delta_2$ containing $\Gamma$, and $T_\Gamma$ is a line segment whose endpoints correspond to $\Delta_1$ and $\Delta_2$.
\item There is exactly one component $\Delta_1$ containing $\Gamma$, the preimage of $\Gamma$ in the normalization of $\Delta_1$ has two irreducible components and $T_\Gamma$ is a circle.

\item There is exactly one component $\Delta_1$ containing $\Gamma$, the preimage of $\Gamma$ in the normalization of $\Delta_1$ has one irreducible component and $T_\Gamma$ is a line segment with one endpoint corresponding to $\Delta_1$.
\end{enumerate}
\end{lemma}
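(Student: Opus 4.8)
The plan is to analyze the local structure of the pair near the one-dimensional lc center $\Gamma$, reduce to understanding $\Gamma$ in the normalization $(D,B)$, and then read off the dual complex $T_\Gamma$ from the combinatorics of how the conductor curve(s) sit over $\Gamma$. Since $T_\Gamma$ by definition only sees valuations centered along $\Gamma$ or along the divisorial components $\Delta_i \supset \Gamma$, and since this subcomplex is independent of the chosen dlt minimal model, I am free to work with a convenient birational model and to localize at the generic point of $\Gamma$.

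First I would enumerate the possibilities for how $\Gamma$ arises. Because $\Delta$ is slc, it has at worst nodal singularities in codimension one, so near the generic point of $\Gamma$ the surface $\Delta$ looks like either (a) two smooth sheets crossing transversally (a node in the family), or (b) a single smooth sheet (so $\Gamma$ is not actually in the double locus and is a lc center only by virtue of the different — but under our hypotheses lc centers lie in the double locus). The key invariant is the number of preimages of the generic point of $\Gamma$ in the normalization $D$, together with the local monodromy/gluing given by the involution $\iota$. Passing to the conductor $C$ on $D$, the curve $\Gamma$ has either one or two irreducible components of $C$ lying over it, which gives exactly the three cases: two components $\Delta_1, \Delta_2$ meeting along $\Gamma$ (case 1); one component $\Delta_1$ with two conductor curves over $\Gamma$ glued by $\iota$ (case 2); and one component $\Delta_1$ with a single conductor curve that maps as a double cover onto $\Gamma$ (case 3). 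These correspond precisely to the three cases in the half-edge graph description preceding the lemma.

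Next I would compute $T_\Gamma$ in each case by passing to a dlt minimal model $(X',\Delta')$ and examining the fiber over the generic point of $\Gamma$. The generic point of $\Gamma$ is a codimension-two lc center; along it the pair is, after localizing, a surface-over-a-curve situation whose dual complex is a graph. In case 1, the two divisorial components give two vertices and the valuations centered on $\Gamma$ fill in the edges between them; because the intersection is a simple transversal node, the dual complex is a single line segment joining the two vertices. In case 3, there is a single divisorial vertex $\Delta_1$, and the double-cover structure means $\Gamma$ contributes a single half-edge dangling from that vertex, giving a line segment with one free endpoint. In case 2, the single component $\Delta_1$ gives one vertex, but the two conductor curves over $\Gamma$ (identified by $\iota$) contribute two edges from that vertex back to the stratum over $\Gamma$, and the gluing closes these into a circle. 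The cleanest way to make these rigorous is to match $T_\Gamma$ against the half-edge graph $\mathcal{G}_i$ (the barycentric subdivision of the dual graph of $(D_i,B_i)$) restricted near $\Gamma$, invoking that PL homeomorphism type is preserved under the crepant birational equivalences guaranteed by de Fernex--Koll\'ar--Xu.

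The main obstacle I anticipate is case 2, specifically verifying that the involution $\iota$ genuinely identifies the two ends so that the result is a \emph{circle} rather than two separate edges or an interval. This requires knowing that $\iota$ preserves the different on $B^\nu$ (the gluing compatibility that makes $\Delta$ slc, as recorded in the excerpt via \cite[Theorem 5.38]{KollarBook}) and that it acts without fixing the relevant stratum, so that the two preimages of $\Gamma$ in the normalization are genuinely swapped and glued along the corresponding simplices. I would handle this by analyzing $\iota$ on the conductor $C$ near $\Gamma$: the two branches of $C$ over $\Gamma$ must be interchanged by $\iota$ (rather than each preserved), and this interchange is exactly what glues the two edges emanating from the single vertex $\Delta_1$ into a closed loop. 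Once this identification is pinned down, the circle follows, and the remaining two cases are comparatively routine local computations.
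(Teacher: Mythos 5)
Your case division (two components meeting along $\Gamma$; one component whose conductor preimage over $\Gamma$ is reducible; one component with an irreducible conductor curve mapping $2{:}1$ onto $\Gamma$) matches the paper's, and localizing at the generic point of $\Gamma$ is the right starting intuition. But the proposal omits the technical core, which is what rules out every other graph. The paper's proof rests on three ingredients you never supply. First, a degree bound: for any divisor $E$ of the dlt minimal model dominating $\Gamma$ (including exceptional divisors, which are vertices of $T_\Gamma$ your argument does not discuss at all), $-(K_{E}+B_E)$ is trivial over $\Gamma$, so by the Shokurov--Koll\'ar connectedness theorem the generic fiber of $E \to \Gamma$ meets at most two boundary components; hence every vertex of $T_\Gamma$ has degree at most $2$. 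Without this, nothing prevents $T_\Gamma$ from branching, and your assertions ``single line segment,'' ``single half-edge dangling,'' ``the gluing closes these into a circle'' are unsupported --- they are claims about the dlt model, not about the combinatorics of the conductor. Second, in case (1) the justification ``because the intersection is a simple transversal node'' is not valid: the node is a property of $\Delta$, but $X$ itself can be singular along $\Gamma$ (e.g.\ a quotient singularity along the curve), so a dlt model can insert a chain of exceptional vertices between $\Delta_1$ and $\Delta_2$; one must still show the result is a segment and not something worse. The paper does this by showing $X$ is $\mathbb{Q}$-factorial near $\Gamma$, so that the pair is qdlt there and the complex over $\Gamma$ is a $1$-simplex. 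Third, to distinguish the circle from a segment in case (2), the paper uses that $\mathcal{D}_\Gamma$, the locus of valuations centered exactly on $\Gamma$ (equivalently $T_\Gamma \setminus \{v(\Delta_1)\}$), is connected; combined with the degree bound and the fact that $v(\Delta_1)$ has degree $2$ (one edge for each conductor component over $\Gamma$), this forces $T_\Gamma$ to be a circle.

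Relatedly, you mis-locate the ``main obstacle.'' That $\iota$ interchanges the two conductor components over $\Gamma$ in case (2) is essentially definitional: the normalization is $2{:}1$ over the double curve $\Gamma$, so if there are two preimage components the gluing must exchange them, and if each were $\iota$-invariant you would be in case (3), not case (2). The genuine difficulty is on the dlt-model side: proving that the valuations centered on $\Gamma$ form a single unbranched arc joining the two edges at $v(\Delta_1)$, which is exactly the connectedness-plus-degree-bound argument sketched above, and which no amount of analysis of $\iota$ on the conductor can replace.
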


\begin{proof}
We will use the fact that for some $0 \leq \Delta' \leq \Delta$, $(X,\Delta')$ is klt because a klt surface pair is always $\mathbb{Q}$-factorial\cite[Proposition 4.11]{KollarMori}. Hence in a neighborhood of $\Gamma$, $X$ is $\mathbb{Q}$-factorial. Thus if two components $\Delta_1$ and $\Delta_2$ contain $\Gamma$, then $(X,\Delta)$ is qdlt near $\Gamma$ and hence the dual complex is a segment connecting two points corresponding to the two divisors.

In any case we note that the complex $T_\Gamma$ is a graph, and the subset $\mathcal{D}_\Gamma(X,\Delta)$ is connected. Consider a dlt minimal model $(X',\Delta')$. Let $\Gamma$ be a component of $\Delta'$ which dominates $\Gamma$, and let $B$ be the rest of the boundary restricted to $\Gamma$. The degree of $\Gamma$ as a vertex of $T_\Gamma$ is the number of components of $B$ dominating $\Gamma$, but $-(K_{\Gamma}+B)$ is trivial over $\Gamma$, so by the Shokurov-Koll{\'a}r connectedness theorem, there are at most two such components.

Hence the graph $T_\Gamma$ is either a segment or a circle. If the preimage of $\Gamma$ in the normalization of $\Delta_1$ has two components, then since $\mathcal{D}_\Gamma(X,\Delta)$, the complement of $v(\Delta_1)$ in $C$, is connected, we must have that $T_\Gamma $ is a circle.

Otherwise, the preimage of $\Gamma$ has one component and so $v(\Delta_1)$ is a vertex of degree $1$, so $C$ is a line segment.
\end{proof}

Consider a threefold log canonical pair $(X,\Delta)$ such that $(D,B)$ is snc. 
\begin{proposition}\label{halfedgeisom}
There exists a PL homeomorphism $\theta \colon \mathcal{C}_1(X,\Delta) \to \mathcal{D}_1(X',\Delta')$ such that 
\begin{enumerate}
\item For any $0$-center $\Delta_i$, $\theta$ sends the normalization $D_i$ to the strict transform in $\Delta'$.
\item For any $1$-center $\Gamma$, the locus of $\mathcal{D}_1(X',\Delta)$ consisting of valuations supported on $\Gamma$ is $\theta$ of the union of the positive half edges corresponding to curves $C_i$ dominating $\Gamma$.
\item With $B_i \subset D_i$ and $\Gamma$ as above, if $\Delta'_i$ is the strict transform of $\Delta_i$ in $X'$ and $B'_i$ the strict transform of $B_i$ in $\Delta'_i$, then the preimage under $\theta$ of valuations supported on $B'_i$ has non empty intersection with any neighborhood of $0$ in the half edge corresponding to $B_i$.
\end{enumerate}
\end{proposition}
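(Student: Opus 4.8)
The plan is to construct $\theta$ piece by piece over each lc center, using Lemma \ref{graphs} as the local dictionary, and then check that these local homeomorphisms agree on overlaps so that they glue to a global PL homeomorphism. Both sides are $1$-dimensional complexes that arise from the same geometry, so the strategy is to match them stratum-by-stratum rather than to produce a single closed-form map.

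First I would set up the correspondence on vertices. The vertices of $\mathcal{C}_1(X,\Delta)$ are the components $\Delta_i$, i.e.\ the normalizations $D_i$, and these correspond exactly to the $0$-cells of $\mathcal{D}_1(X',\Delta')$ given by the strict transforms of the $\Delta_i$ in $\Delta'$. This defines $\theta$ on $0$-cells and forces condition (1). Next I would treat each $1$-dimensional lc center $\Gamma$ separately. The key tool here is Lemma \ref{graphs}, which classifies the subcomplex $T_\Gamma = \mathcal{D}_W(X,\Delta)$ into exactly three local pictures: a segment with both endpoints corresponding to divisors (case 1), a circle (case 2), and a segment with one free endpoint (case 3). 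By construction $\mathcal{C}_1(X,\Delta)$, viewed as a half edge graph, has the matching three cases in its definition: two half edges between two vertices, an edge from $D_1$ to itself, or a single free half edge. I would check that these three cases are in bijection and are distinguished by the same normalization data (the number of components of the conductor over $\Gamma$ and whether $\iota$ identifies them), so that over each $\Gamma$ I may define $\theta$ to be an orientation-compatible PL homeomorphism $T_\Gamma^{\mathcal{C}} \to T_\Gamma$ sending the divisor-end of each half edge to the vertex $v(\Delta_i)$ and the curve-end to the locus of valuations supported on $\Gamma$. This simultaneously gives conditions (1) and (2): the positive half edges corresponding to the $C_i$ dominating $\Gamma$ are sent to exactly the valuations supported on $\Gamma$, since in every case of Lemma \ref{graphs} those valuations form the complement of the divisor-vertices in $T_\Gamma$.

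For condition (3) I would argue near the curve-end of each half edge. On the dlt model $(X',\Delta')$, the valuations supported on the strict transform $B_i'$ sit in the closure of the open simplex of $\Gamma$ and accumulate at the vertex $v(\Delta_i)$ coming from exploding $\Gamma$; in the half edge picture this vertex is the endpoint $1$ of the half edge corresponding to $B_i$. Thus I want $\theta^{-1}$ of the $B_i'$-valuations to meet every neighborhood of $0$... no: of the curve-end. Here I must be careful that the \emph{divisor} end (parameter $0$) corresponds to $\Delta_i$ while the accumulation happens at the opposite end, and verify that the chosen PL homeomorphism respects this. Because $\theta$ is a homeomorphism sending the half edge onto $T_\Gamma$ with the curve-valuations at the correct end, the preimage of the $B_i'$-valuations accumulates at parameter $1$, which is the curve-end; I would confirm that this is precisely the statement of (3) once the orientation convention (stated after $\sigma_Z$: $0$ is the divisor, $1$ is the curve) is taken into account.

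The main obstacle is gluing the locally-defined homeomorphisms into a single $\theta$ and verifying it is PL rather than merely topological. The subtlety is that the various $T_\Gamma$ share their divisor-vertices $v(\Delta_i)$, so I must ensure the piecewise definitions agree there; this follows because each local map sends the divisor-end to the same $0$-cell, but one should check that the link of each vertex in $\mathcal{C}_1$ maps bijectively to the link in $\mathcal{D}_1$, so that no half edges are doubled or dropped. Crucially, the Shokurov--Koll\'ar connectedness argument in the proof of Lemma \ref{graphs} shows each $\Gamma$-vertex on the dlt side has degree at most two, matching the half edge combinatorics, which is what makes the link bijection possible. Since everything is $1$-dimensional, PL-ness is automatic once the map is a simplicial isomorphism after a common subdivision, so the remaining work is bookkeeping: choosing compatible subdivisions and checking that the assignment on half edges is a bijection consistent with $\sigma_Z$. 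I expect the snc hypothesis on $(D,B)$ to be used here to rule out more degenerate local configurations and to guarantee that the curves $C_i$ dominating $\Gamma$ and the conductor behave as in the three clean cases of Lemma \ref{graphs}.
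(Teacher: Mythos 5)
Your overall strategy -- endowing $\mathcal{D}_1(X',\Delta')$ with a half edge graph structure via Lemma \ref{graphs} and matching the three local cases against the three cases in the definition of $\mathcal{C}_1(X,\Delta)$ -- is exactly the paper's approach, and your attention to the gluing at shared divisor-vertices is reasonable bookkeeping. However, your treatment of condition (3) contains a genuine error: you conclude that the valuations supported on $B_i'$ accumulate at the \emph{curve} end (parameter $1$) of the half edge and then assert that this ``is precisely the statement of (3).'' It is not; condition (3) explicitly concerns neighborhoods of $0$, the \emph{divisor} end. The geometry also goes the other way: since $B_i'$ lies \emph{inside} the strict transform $\Delta_i'$, the edge of $\mathcal{D}(\Delta')$ whose interior consists of valuations centered on $B_i'$ has the vertex $v(\Delta_i')$ as one of its endpoints. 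By condition (1), $\theta$ sends $0$ (the divisor end of the half edge, per the paper's orientation convention) to $v(\Delta_i')$, so the preimage of those valuations is an interval abutting $0$, not $1$. You in fact state correctly that these valuations accumulate at $v(\Delta_i)$, but then misidentify that vertex with the endpoint $1$, contradicting the orientation convention you yourself cite; the self-correction ``...no: of the curve-end'' goes in the wrong direction.

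This is not a cosmetic slip, because condition (3) is precisely the orientation pin-down that the rest of the paper relies on: as the paper puts it, the final condition ``guarantees a consistent choice of orientation on loops from a vertex to itself.'' In the self-intersection cases (cases (2) and (3) of the half edge structure), the edge over $\Gamma$ is a loop at the single vertex $D_1$, and without knowing which branch of that loop near $v(\Delta_1')$ carries the valuations supported on which strict transform $C_j'$, the identification of the attaching graphs $\mathcal{F}_Z$ with $\mathcal{G}(Z)$ in the proof of Theorem \ref{dmr} could be off by the flip exchanging the two half edges, and the disks $\mathcal{A}_Z$ would be glued incorrectly. To repair your argument, replace the final paragraph on (3) with: choose the division of each edge of $\mathcal{D}_1(X',\Delta')$ into half edges so that the subinterval of valuations supported on the strict transform of $C_j$ contains (a deleted neighborhood of) the point $0$ of $L_{C_j}$; this is possible because that subinterval is adjacent to $v(\Delta_i')$, and it yields (3) by construction.
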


\begin{proof}
We use Lemma \ref{graphs} to endow $\mathcal{D}_1(X',\Delta')$ with the structure of a half edge graph as follows. The points are components of $D$, and the edges with $L(0)=D_i$ are curve strata with $C_j \subset D_i$. Two such half edges satisfy $L_{C_j}(1)=L_{C_{j'}}(1)$ if and only if $C_j$ and $C_{j'}$ are identified by the involution $\iota$. For all edges, we may choose the division into half edges such that for any dlt minimal model $(X', \Delta')$ the edge corresponding to the strict transform of $C_j$ intersects the image of $L_{C_j}$ in an interval containing $0$.

In each case we have that this half edge structure agrees with the half edge structure on $\mathcal{C}_1(X,\Delta)$, and the final condition guarantees a consistent choice of orientation on loops from a vertex to itself.

\end{proof}

To complete the proof of Theorem \ref{dmr} we must attach to $\mathcal{D}_1(X',\Delta')$ the simplices corresponding to centers dominating $0$-dimensional lc centers of $(X,\Delta)$. Let $P$ be such a center.

\begin{lemma}\label{zerocenters}
Let $W$ be an lc center of $(X',\Delta')$ whose image in $X$ is $Z$. Then $W$ contains a $0$-dimensional lc center of $(X',\Delta')$.
\end{lemma}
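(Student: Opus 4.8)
The plan is to reduce the statement to a fact about \emph{minimal} lc centers and then rule out the positive-dimensional cases using the standing hypotheses. Since every lc center contains a minimal one, I would choose a minimal lc center $V$ of $(X',\Delta')$ with $V\subseteq W$; as $Z$ is a point and $f(V)\subseteq f(W)=Z$, necessarily $f(V)=Z$. It then suffices to show $\dim V=0$, for then $V$ is the desired $0$-dimensional lc center contained in $W$. By adjunction for dlt pairs \cite{KollarBook}, $V$ is normal and carries a different $\mathrm{Diff}_V$ with $(V,\mathrm{Diff}_V)$ lc; minimality forces $\mathrm{Diff}_V=0$, and since $(X',\Delta')$ is dlt it is snc near the lc center $V$, so $V$ is in fact smooth. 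Because $f$ contracts $V$ to the point $Z$ and $K_{X'}+\Delta'=f^*(K_X+\Delta)$, we have
\[
K_V\sim_{\mathbb{Q}}(K_{X'}+\Delta')\big|_V=f^*(K_X+\Delta)\big|_V\sim_{\mathbb{Q}}0 ,
\]
so $V$ is a smooth projective variety with $K_V\sim_{\mathbb{Q}}0$, and the lemma reduces to excluding such a $V$ of dimension $1$ or $2$.

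For $\dim V=2$ I would argue by connectedness. Here $V$ is an $f$-exceptional component $\Delta'_{i_0}$ of $\Delta'$ contracted to $Z$, so $\Delta'_{i_0}\cap f^{-1}(Z)=V$. Because $Z\in\Delta$ and $\Delta$ is Cartier and nonempty, its strict transform $\tilde\Delta$ is a coefficient-$1$ component of $\Delta'$ with $f(\tilde\Delta)=\Delta\ni Z$, so $\tilde\Delta\cap f^{-1}(Z)\neq\emptyset$; minimality of $V$, i.e.\ $\mathrm{Diff}_V=0$, forces $\tilde\Delta\cap V=\emptyset$, so these points lie in $\Delta'\cap f^{-1}(Z)$ but outside $V$. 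As no component other than $\Delta'_{i_0}$ meets $V$, the set $V$ is open and closed in $\Delta'^{=1}\cap f^{-1}(Z)=\Delta'\cap f^{-1}(Z)$ and proper, contradicting the Shokurov--Koll\'ar connectedness theorem applied to the crepant morphism $f$.

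For $\dim V=1$ the degree identity $\deg K_V=-\deg\mathrm{Diff}_V=0$ shows $V$ is an elliptic curve, and this is the case I expect to be the main obstacle. I would reduce it to the surface normalization by adjunction: $V$ is a component of the different of some surface $\Delta'_1\supseteq V$, and the induced pair $(\Delta'_1,\Theta_1)$ is dlt and crepant over its image, with $V$ an lc center contracted to a point lying over $Z$. A crepantly contracted elliptic curve with trivial different is precisely the exceptional locus of a simple elliptic (or cusp) singularity, which is log canonical but not dlt. Since the hypothesis guarantees that the normalization $(D_i,B_i)$ is a dlt surface pair, such singularities cannot occur on the relevant target, yielding the contradiction.

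Making this last reduction precise is the step that I expect to require the most care: one must arrange that the surface whose image carries the forbidden singularity is genuinely a component of the normalization $(D_i,B_i)$ (rather than a purely exceptional divisor of the threefold model), identify $\Theta_1$ with the conductor-plus-different on $D_i$, and verify that the contracted elliptic curve violates dlt-ness of $(D_i,B_i)$ itself. The two-dimensional case above is comparatively formal, being a direct application of connectedness together with the Cartier hypothesis on $\Delta$, so the genuine work lies in the curve case and in this bookkeeping of how strata of $(X',\Delta')$ over $Z$ correspond to strata of the surface pairs $(D_i,B_i)$.
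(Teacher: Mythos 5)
Your reduction to a minimal lc center $V$ with $f(V)=Z$ is valid, and your two-dimensional case is correct; it is in fact a genuinely different argument from the paper's (the paper never isolates minimal centers, and handles divisors by a chain-and-induction argument rather than by your direct appeal to Shokurov--Koll\'ar connectedness, which is arguably cleaner for that case). The genuine gap is in the curve case, and it occurs earlier than the step you flagged: minimality does \emph{not} force $\mathrm{Diff}_V=0$. Dlt adjunction identifies the lc centers of $(V,\mathrm{Diff}_V)$ with the lc centers of $(X',\Delta')$ properly contained in $V$, so minimality only yields that $(V,\mathrm{Diff}_V)$ is \emph{klt}, i.e.\ all coefficients of $\mathrm{Diff}_V$ are $<1$; fractional coefficients of the form $1-1/m$ can and do occur, coming from quotient singularities of $X'$ along curves meeting $V$. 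Your appeal to ``snc near the lc center'' does not rescue this: the models produced by Hacon's theorem satisfy only the standard dlt condition (snc at the \emph{generic} point of each lc center), as in the example $X'=\mathbb{A}^1\times(\mathbb{A}^2/\pm 1)$ with $\Delta'$ the image of a coordinate plane, whose unique (hence minimal) lc center carries $\mathrm{Diff}=\tfrac{1}{2}(\mathrm{sing.\ curve})\neq 0$. Consequently $V$ need not be elliptic: it can be $\mathbb{P}^1$ with $\deg\mathrm{Diff}_V=2$, a klt log Calabi--Yau curve pair, and your classification of crepantly contracted elliptic curves (simple elliptic or cusp singularities) says nothing about this possibility, which is precisely the case your argument must also exclude.

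Moreover, even within the elliptic subcase, the reduction you defer to the end does fail in the situation you worry about: if the surface $\Delta_1'\supseteq V$ is $f$-exceptional and itself contracted to $Z$, there is no component $(D_i,B_i)$ of the normalization of $\Delta$ on which the ``forbidden singularity'' could live, and an elliptic lc center on an abstract log Calabi--Yau surface pair $(\Delta_1',\Theta_1)$ is perfectly consistent, so no contradiction can be extracted locally on that surface. The paper closes both holes by a global mechanism your sketch lacks: after passing to a model on which $f^{-1}(Z)$ is purely divisorial, it connects any divisor over $Z$ to the strict transform $\Delta_0'$ of a component of $\Delta$ through $Z$ by a chain of divisors meeting in curves, anchors the induction by adjunction on $\Delta_0'\to D_0$ (where the dlt hypothesis on $(D_0,B_0)$ actually lives), and propagates along the chain using the Koll\'ar--Xu theorem: a log Calabi--Yau surface pair possessing one $0$-dimensional lc center has dual complex $S^1$ or an interval, hence \emph{every} $1$-dimensional lc center in it contains a $0$-dimensional one. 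Some input of this kind, linking curves over $Z$ back to the strict transforms of the $D_i$, is what your curve case is missing, and it is not supplied by connectedness plus the classification of contracted elliptic curves.
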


\begin{proof}
Since the image of any $0$-dimensional lc center is a $0$ dimensional lc center, we may assume that the preimage of $Z$ in $X'$ is a divisor, and that $W$ is also a divisor. Let $\Delta'_0$ be the strict transform of some component $\Delta_0$ of $\Delta$ which contained $P$. Then since the fibers of $X' \to X$ are connected, there is a finite chain of divisors $\Delta'_0=W_0, W_1 \ldots W_n = W$ such that each successive pair of divisors intersects in a curve.

It suffices to show for each pair $W_i, W_{i+1}$ that $W_i$ contains a $0$-dimensional lc center along the curve of intersection. For $\Delta'_0$, this curve is an exceptional curve over $D_0$, the normalization of $\Delta_0$. \cite[Proposition 4.6]{KollarBook}. Hence it contains some $0$-dimensional lc center. Thus $W_1$ satisfies the conclusion of the lemma.

Now suppose $W_i$ satisfies the lemma. We have that $K_{W_i}+C_i$ is numerically trivial where $C_i$ is the union of all $1$ dimensional lc centers contained in $W_i$. Hence by \cite{KollarXu} the dual complex is either $S^1$ or an interval. In either case every curve in $W_i$ contains a $0$-dimensional lc center. Hence $W_{i+1}$ contains a $0$-dimensional lc center and the lemma follows by induction.

\end{proof}

Since the PL homeomorphism type of $\mathcal{D}(\Delta')$ does not depend on the choice of minimal model, we may assume that the preimage of every lc center of $(X,\Delta)$ is a divisor. Let $\mathcal{B}$ be the flag complex of $\mathcal{D}(\Delta')$, which is the first barycentric subdivision. Let $Z$ be a log canonical center of dimension $0$. Denote by $\mathcal{B}_Z$ the subcomplex whose simplices correspond to chains of strata in $(X',\Delta')$ which all dominate $Z$.

\begin{proposition}
Under the assumptions of Theorem \ref{dmr}, the complex $\mathcal{B}_Z$ is PL homeomorphic to a closed $2$-disk.
\end{proposition}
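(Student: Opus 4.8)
The plan is to show that $\mathcal{B}_Z$ is a compact, connected, contractible surface with nonempty boundary; since every compact connected contractible surface with boundary is homeomorphic to the closed $2$-disk, and (as recalled in Section \ref{dualcx}) any simplicial complex homeomorphic to the $2$-disk is PL homeomorphic to the standard $2$-simplex, this gives the result. As the PL type of $\mathcal{D}(\Delta')$ is independent of the chosen dlt minimal model, I may assume the preimage $F = f^{-1}(Z)$ is a divisor. The strata of $(X',\Delta')$ dominating the point $Z$ are then exactly those contained in $F$, so $\mathcal{B}_Z$ is the barycentric subdivision of the part of $\mathcal{D}(\Delta')$ supported on $F$. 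Since $X'$ is a threefold and $Z$ a point, $F$ is $2$-dimensional, and the $2$-simplices of $\mathcal{B}_Z$ are the flags $P \subset C \subset W$, where $W$ is an exceptional divisor over $Z$, $C$ a one-dimensional lc center, and $P$ a zero-dimensional lc center, all with image $Z$; such flags exist by Lemma \ref{zerocenters}, so $\mathcal{B}_Z$ is genuinely $2$-dimensional.

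First I would establish that $\mathcal{B}_Z$ is a surface with boundary. Because $(X',\Delta')$ is dlt, it is snc near every lc center, so $\mathcal{D}(\Delta')$ is a pseudomanifold with boundary: every edge lies in at most two $2$-simplices, equivalently each curve stratum $C$ bounds at most two triangles. I then compute the link of each vertex of $\mathcal{B}_Z$. The link of a divisor vertex $W$ is the barycentric subdivision of the dual graph of the boundary of the dlt log Calabi--Yau surface $(W,\mathrm{Diff})$, which as in Lemma \ref{graphs} is a circle or an arc; the link of a curve vertex $C$ is a circle or a segment, again by Lemma \ref{graphs}; and the link of a point vertex $P$ is the subdivided boundary of its triangle. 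In every case the link is a circle or an arc, so $\mathcal{B}_Z$ is a compact surface with boundary. Its boundary consists precisely of those curve strata $C$ over $Z$ that also lie on a strict transform $\Delta'_i$ not dominating $Z$. Such a $C$ exists because the hypothesis that every lc center is contained in $\Delta$ forces $Z \in \Delta_i$ for some component $\Delta_i$, whose strict transform $\Delta'_i$ meets $F$ along a curve over $Z$; hence $\partial \mathcal{B}_Z \neq \emptyset$. Connectedness of $\mathcal{B}_Z$ follows from the chain argument in the proof of Lemma \ref{zerocenters}: any two divisors over $Z$ are joined by a sequence of divisors meeting in curves, each of which carries a $0$-dimensional lc center.

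The heart of the argument, and the step I expect to be the main obstacle, is contractibility. Because $K_{X'} + \Delta' = f^*(K_X + \Delta)$ is numerically trivial over $Z$, the reduced fiber $F$ with its boundary $\Delta'|_F$ is a crepant, relatively log Calabi--Yau degeneration contracting to the lc center $Z$, and the fiber over $Z$ is rationally chain connected by the connectedness principle for fibers of such contractions. The contractibility theorem for dual complexes of fibers of crepant morphisms of de Fernex--Koll\'ar--Xu \cite{deFernexKollarXu2012}, together with the Calabi--Yau dual complex results of Koll\'ar--Xu \cite{KollarXu} already invoked in Lemma \ref{zerocenters}, then shows that the dual complex of the reduced fiber, namely $\mathcal{B}_Z$, is contractible. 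The delicate points here are verifying that the relevant rational connectedness hypothesis holds for an lc (not merely klt) center and that the cited contractibility statement applies verbatim in this relative, fiber-over-a-point setting. Granting this, a compact connected contractible surface with nonempty boundary is homeomorphic to the closed $2$-disk, and the earlier dimension-$2$ fact upgrades this to a PL homeomorphism, completing the proof.
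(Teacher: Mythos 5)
Your overall skeleton (contractible $+$ compact surface with boundary $\Rightarrow$ closed $2$-disk, then the dimension-$2$ PL upgrade) is the same as the paper's, but the contractibility step, which you yourself flag as the main obstacle, contains a genuine gap that your proposed route cannot close. Your argument via ``crepant log Calabi--Yau degeneration'' and rational chain connectedness never uses the hypothesis that $\Delta$ is Cartier, and a statement of the form ``the dual complex of the fiber over an lc center is contractible'' is simply false for general lc (non-klt) points: cusp-type lc singularities have dlt modifications whose exceptional dual complex is a circle, and cones over log Calabi--Yau surface pairs with a cycle of boundary curves give $0$-dimensional lc centers $Z$ with $\mathcal{B}_Z \cong S^1$. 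So no amount of care in transporting the de Fernex--Koll\'ar--Xu or Koll\'ar--Xu results to the ``relative, fiber-over-a-point setting'' will work without first improving the singularity. The paper's fix is exactly the missing use of the Cartier hypothesis: since $\Delta$ is Cartier with all coefficients $1$ and $Z \in \Delta$, every divisor $E$ with center $Z$ has $\mathrm{mult}_E f^*\Delta \geq 1$, so $a(E;X) = a(E;X,\Delta) + \mathrm{mult}_E f^*\Delta > -1$; hence $X$ itself is log terminal near $Z$, and \emph{then} \cite[Theorem 2]{deFernexKollarXu2012} (contractibility of the dual complex of the fiber over a log terminal point) applies to give contractibility of $\mathcal{B}_Z$.

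There is also a smaller but real gap in your manifold-with-boundary step, at the vertices corresponding to $0$-dimensional lc centers $P$ over $Z$. You assert the link is ``the subdivided boundary of its triangle,'' but $\mathcal{B}_Z$ only retains chains of strata all of which dominate $Z$: since some of the three divisors and three curves through $P$ may be strict transforms of strata of $(X,\Delta)$ that do \emph{not} dominate $Z$, the link is in general only a subgraph of that $6$-cycle, and one must prove this subgraph is connected (an arc or the full circle) for $\mathcal{B}_Z$ to be a surface there. The paper does this using the normalization that the preimage of every lc center of $(X,\Delta)$ is a divisor, so that a curve over $Z$ cannot lie only on divisors not dominating $Z$. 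Similarly, your appeal to Lemma \ref{graphs} for the link of a curve vertex is misapplied: that lemma concerns $1$-dimensional lc centers of $(X,\Delta)$, not exceptional curves contracted to the point $Z$; the paper instead identifies this link as a complete bipartite graph and bounds it using that a curve over $Z$ lies on at most two divisors over $Z$ and carries at most two $0$-centers (adjunction making it a log Calabi--Yau curve pair).
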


\begin{proof}
As $\Delta$ is Cartier, $X$ is log terminal near $Z$. Hence we may apply \cite[Theorem 2]{deFernexKollarXu2012} to see that $\mathcal{B}_Z$ is contractible. Now, for any lc center of $(X',\Delta')$ whose image is $Z$, there is some divisor $D$ containg that center whose image is also $Z$. We will now show that $\mathcal{B}_Z$ is a manifold with boundary.

The vertices of $\mathcal{B}_Z$ correspond precisely to the lc centers of $(X', \Delta')$ with image $Z$. We show that the link of $\mathcal{B}_Z$ at each such center $W$ is either an interval or a circle.

If $W$ is a divisor $D_W$, the link is the dual graph of the corresponding log Calabi-Yau surface pair. The divisor $D_W$ contains a $0$-dimensional lc center so the dual complex has dimension $1$, so it is either an interval or $S^1$ \cite{KollarXu}.

If $W$ is a curve $C_W$ there again must be a $0$-center contained in $C_W$. The link is the following graph: The vertices are divisors containing $C_W$ exceptional over $z$ along with $0$-centers contained in $C_W$. The graph is complete bipartite; each divisor is connected to each $0$-center by an edge. As there are at most two divisors exceptional over $z$ containing $C_W$, and at most two $0$-centers contained in $C_W$, we must have that the link is either an interval or $S^1$.

Finally we consider the case where $W$ has dimension $0$. Then the link is the graph whose vertices are other lc centers dominating $Z$ which contain $W$. Two vertices are connected if they correspond to a curve contained in a divisor. This is a subgraph of the $6$ vertex cycle which contains at least one edge, so it suffices to show the graph is connected. This is only possible if there is a curve dominating $z$ not contained in a divisor dominating $Z$. But this is not true by assumption.

We conclude that $\mathcal{B}_Z$ is a contractible PL surface with boundary, hence a closed $2$-disk.

\end{proof}

Note that by Lemma \ref{zerocenters} every lc center dominating $Z$ contains a $0$-dimensional lc center. Hence $\mathcal{B}_Z$ is the union of $2$-simplices. Now, given $(X',\Delta')$ as above, we define $\mathcal{A}_Z$ to be the abstract union of $2$-simplices $Q_k \subset C_j \subset D_i$, where $Q_k \subset C_j \subset D_k$ and $Q_{k'} \subset C_{j'} \subset D_{k'}$ intersect along a subchain if they do in $\mathcal{D}(\Delta')$ and some stratum in the common subchain dominates $Z$.

\begin{proposition}
$\mathcal{A}_Z$ is a closed PL $2$-disk. 
\end{proposition}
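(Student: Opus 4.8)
The plan is to show that $\mathcal{A}_Z$ is a compact, connected, simply connected PL surface with nonempty boundary; since in dimension $2$ the PL and topological categories agree (Section \ref{dualcx}), the classification of surfaces then forces $\mathcal{A}_Z$ to be a closed $2$-disk, exactly as in the preceding proposition for $\mathcal{B}_Z$. The starting observation is that the $2$-simplices of $\mathcal{A}_Z$ are the chains $Q_k \subset C_j \subset D_i$ in which $Q_k$ is a $0$-dimensional center lying over $Z$, while $C_j$ (a strict transform of a conductor curve) and $D_i$ (a strict transform of a component of $\Delta$) do not dominate $Z$. Consequently the only stratum of such a chain that dominates $Z$ is $Q_k$, so the gluing rule defining $\mathcal{A}_Z$ identifies two simplices precisely along common subchains through a vertex $Q_k$, namely along a shared vertex $\{Q_k\}$ or a shared edge $\{Q_k, C_j\}$.

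First I would verify that $\mathcal{A}_Z$ is a PL surface with boundary by computing vertex links, following the template used for $\mathcal{B}_Z$. The essential case is the link of a point $Q_k$: as $Q_k$ dominates $Z$, every subchain through it is glued, so all simplices incident to $Q_k$ are assembled along their $(Q_k, C_j)$-edges; moreover, as recorded in the construction of $\mathcal{G}(Z)$, the slc structure forces $Q_k$ to lie on exactly two curve strata $C_j, C_{j'}$. Hence the link of $Q_k$ is a graph in which every vertex has degree at most $2$, i.e. an arc or a circle. By contrast, a vertex $C_j$ or $D_i$ is never identified with anything except through a subchain containing some $Q_k$, so its free $(C_j, D_i)$-edge lies in a single simplex and belongs to $\partial \mathcal{A}_Z$; its link is an arc. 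Thus every edge lies in at most two $2$-simplices and every vertex link is an arc or a circle, so $\mathcal{A}_Z$ is a surface with boundary.

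Next I would settle connectivity and contractibility. Connectivity is the same statement already used to build $\mathcal{G}(Z)$: the $\iota$-identifications of the points $Q_k$ chain the incident simplices together. In fact these identifications show that the subcomplex of $\mathcal{A}_Z$ spanned by the $Q_k$ and the half-edges $(Q_k, C_j)$ is exactly the half-edge graph $\mathcal{G}(Z)$, and, as in the proof of Proposition \ref{attachment}, each $2$-simplex $Q_k \subset C_j \subset D_i$ is the cone on the corresponding half-edge, so that $\mathcal{A}_Z$ is the cone over $\mathcal{G}(Z)$. Since $\mathcal{G}(Z)$ is homeomorphic to $S^1$ or to a closed interval, the cone over it is contractible, indeed a disk. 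Alternatively one can transfer contractibility from the disk $\mathcal{B}_Z$ of the preceding proposition, the passage between the two complexes amounting to cutting and regluing a disk along boundary arcs.

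A compact, connected, simply connected PL surface with nonempty boundary is PL homeomorphic to the standard $2$-disk, which is the assertion. I expect the main obstacle to be the contractibility and simple-connectivity step: one must check that restricting the gluing to subchains meeting $Z$ creates no new $1$-cycles and no non-manifold wedge points at the $Q_k$, so that the cone-over-$\mathcal{G}(Z)$ description is faithful and the surface is genuinely a disk rather than an annulus or a pinched complex. Verifying that every dropped identification of $\mathcal{D}(\Delta')$ lies on the boundary, and that the link of each $Q_k$ is connected with the correct degree bound, is the delicate part.
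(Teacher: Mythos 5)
Your starting observation misidentifies the complex, and the rest of the argument collapses with it. The $2$-simplices of $\mathcal{A}_Z$ are the flags $Q_k \subset C_j \subset D_i$ of strata of the dlt model $(X',\Delta')$ in which the minimal stratum lies over $Z$; there is no requirement that $C_j$ and $D_i$ fail to dominate $Z$. On the contrary, the model $(X',\Delta')$ was chosen (just before the definition of $\mathcal{B}_Z$) so that the preimage of every lc center of $(X,\Delta)$ --- in particular of $Z$ --- is a \emph{divisor}, so $\mathcal{A}_Z$ has vertices corresponding to exceptional divisors and exceptional curves mapping onto $Z$, and it contains all of $\mathcal{B}_Z$ (the flags all of whose strata dominate $Z$) as a subcomplex. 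That subcomplex can be an arbitrarily large triangulated region depending on the choice of model, and its combinatorics are not visible from the normalization data of $\Delta$. By discarding these simplices you have in effect replaced $\mathcal{A}_Z$ by the cone over $\mathcal{G}(Z)$ and then observed that the cone over an arc or a circle is a disk; but that identification is essentially the conclusion of the proposition (and of the theorem it feeds into), not something one may assume. A symptom of the same confusion: you assert that each $Q_k$ lies on exactly two curve strata. That is true for the preimages of $Z$ in the normalization $D$ of $\Delta$, which is how $\mathcal{G}(Z)$ was defined, but a $0$-dimensional lc center of the threefold dlt pair $(X',\Delta')$ lies on three curve strata and three divisors, so its link in the flag complex is a subgraph of a hexagon, not a vertex of degree at most $2$.

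Because of this, the geometric input cannot be bypassed: the paper first proves that $\mathcal{B}_Z$ is a closed PL $2$-disk, using that $X$ is log terminal near $Z$ (since $\Delta$ is Cartier) to get contractibility from \cite{deFernexKollarXu2012} and the Koll\'ar--Xu classification of links to get that it is a surface with boundary; it then shows $\mathcal{A}_Z$ is a manifold with boundary by analyzing the non-manifold locus $\mathcal{A}'_Z$, which sits inside the simplices whose higher strata do \emph{not} dominate $Z$, and finally glues a collar $\mathcal{A}'_Z \times [0,1]$ onto a sufficiently subdivided copy of the disk $\mathcal{B}_Z$ to conclude that $\mathcal{A}_Z$ is itself a closed $2$-disk. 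Your fallback remark that one can ``transfer contractibility from $\mathcal{B}_Z$ \dots\ by cutting and regluing along boundary arcs'' gestures at exactly this step, but gives no argument for it; making that transfer precise is the actual content of the proof.
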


\begin{proof}
If we perform blowups of $(X',\Delta')$ to perform a barycentric subdivision of the dual complex, the PL type of $\mathcal{A}_Z$ does not change, we again take a barycentric subdivision. In contrast, $\mathcal{B}_Z$ is replaced by a larger complex which is still homeomorphic to a disk. Morever every point of the interior of a $2$-simplex of $\mathcal{A}_Z$ is eventually contained in the interior of $\mathcal{B}_Z$.

Now let $\mathcal{A}'_Z$ be the locus of all points in $\mathcal{A}_Z$ where the complex is not locally homeomorphic to the open $2$-disk, that is, not a manifold. We will show that for these points the link is a closed half space, so that $\mathcal{A}$ is a manifold with boundary. If $x \in \mathcal{A}'_Z$ is contained in some $\mathcal{B}_Z$ then this must be so since $\mathcal{B}_Z$ is a manifold with boundary. Otherwise $x$ is contained in a simplex corresponding to a chain of strata which do not dominate $Z$.

If $x$ is a length $2$ chain then it corresponds to an edge of a $2$-simplex contained in no other $2$-simplex of $\mathcal{A}_Z$. Otherwise $x$ consists of a single stratum. The link at $x$ consists of the attachment graph of all the $2$-simplices containg the stratum $x$. But this is a graph which is a manifold with boundary because each $\mathcal{B}_Z$ is a manifold with boundary. Hence $\mathcal{A}_Z$ is a manifold with boundary.

Now, $\mathcal{A}'_Z$ is some graph which is contained in the boundary of $\mathcal{A}_Z$. Let $\mathcal{E}_Z$ be a closed infinitessimal neighborhood of $\mathcal{A}'_Z$, which must be isomorphic to $\mathcal{A}'_Z \times [0,1]$. For a sufficiently large $\mathcal{B}_Z$ this neighborhood intersects $\mathcal{B}_Z$ in a neighborhood of the boundary isomorphic to $\mathcal{A}'_Z \times [0,1]$. The resulting gluing is a closed PL $2$-disk, so $\mathcal{A}_Z$ is a closed PL $2$-disk.

\end{proof}





We are now ready to prove Theorem \ref{dmr}.

\begin{proof}[Proof of Theorem \ref{dmr}]
We start with the isomorphism $\theta \colon \mathcal{C}_1(\Delta) \to \mathcal{D}_1(X',\Delta')$ from Prop \ref{halfedgeisom}. By Prop \ref{attachment} the complex $\mathcal{C}(\Delta)$ is obtained from $\mathcal{C}_1(\Delta)$ by gluing, for each $0$-dimensional center $Z$, a $2$-disk along the path $\sigma_Z$. If $\sigma_Z$ is a cycle, we glue along the boundary of the disk, and if $\sigma_Z$ is a line, we glue along only part of the boundary.

To obtain $\mathcal{D}(\Delta')$ from $\mathcal{D}_1(\Delta')$ we must likewise include all of the strata dominating a $0$-center $Z$. The closure of all such strata inside $\mathcal{D}(\Delta')$ is the image of the PL $2$-disk $\mathcal{A}_Z$, which is attached by the image of the strata dominating other centers. Let $\mathcal{F}_Z$ be the subcomplex of $\mathcal{A}_Z$ corresponding whose image in $\mathcal{D}(\Delta')$ is contained in $\mathcal{D}_1(X,\Delta)$. The complex $\mathcal{F}_Z$ is a graph, contained in the boundary of the disk $\mathcal{Z}$ Our task is to identify the map from $\mathcal{F}_Z$ to $\mathcal{D}_1(X',\Delta')$ with $\sigma_Z$ via $\theta^{-1}$. 

First we will identify $\mathcal{F}_Z$ with $\Gamma(Z)$. The points of $\mathcal{F}_Z$ which dominate divisors in $(X,\Delta)$ each correspond to a chain $Q_k \subset D_i$ of strata of the normalization of $\Delta$.


The strict transforms in $(X', \Delta')$ of the two curves $C_{j}, C_{j'}$ containing $Q_k$ give two half edges in $\mathcal{A}'_P$ based at $Q_k \subset D_i$. Two such half edges $D_i \subset C_j$, $D_{i'} \subset C_{j'}$ meet at $0$ in $\mathcal{F}_Z$ if and only if $D_i = D_{i'}$ and $Q_k$ is the intersection of $C_{j}$ and $C_{j'}$. Two of these half edges can be extended in the positive direction to meet at $1$ if and only if $\iota$ sends $C_{j}$ to $C_{j'}$ and $\iota|_{C_j}(Q_k)=Q_{k'}$.
Hence we have produces a copy of $\Gamma(Z)$ inside $\mathcal{F}_Z$. Since the half edge $Q_k \subset C_j$ is sent by the attaching map to the half edge corresponding to $C_j \subset D_i$, we have for suitable choice of $\theta$ that the gluings agree.

It remains to show that all of $\mathcal{F}_Z$ is contained in this copy of $\Gamma(P)$. We have exhausted all vertices of $\mathcal{Z}_P$ corresponding to strict transforms of divisors of $\Delta$, the ones that remain dominate some curve. Given such a vertex there is a path of edges in $\mathcal{D}_1(X',\Delta')$ leading to the strict transform of a divisor in $\Delta$.

Following an edge consists of the following: If the original stratum is a curve, we choose one of the divisors containing it. If it is a divisor, we choose one of the curves in dominating the $1$-dimensional lc center. In either case, the two strata contain a $0$-center mapping to $Z$. Following the path gets us back to a chain $Q_k \subset D_j$, hence all strata and edges of this construction were contained in $\Gamma(Z)$.

The theorem follows.
\end{proof}

\begin{example}
Let $X$ be the cone over the second Veronese embedding of $\mathbb{P}^2$, given by 
\[
\Spec \mathbb{C}[x^2, xy, y^2, xz, z^2, yz, z^2].\]
 The affine semi-log canonical surface 
\[S= \Spec \mathbb{C}[s^2, st, t^2, s^2t, st^2]
\]
of Example \ref{doubleumbrella} embeds as a divisor $\Delta$ via the ring homomorphism
\[
x^2 \to s^2, xy \to st, y^2 \to t^2, xz \to s^2t, yz \to st^2, z^2 \to s^2t^2.
\]

It is straightforward to check that the pair $(X,\Delta)$ is log canonical. Theorem \ref{dmr} shows that $\mathcal{DMR}(X,\Delta)$ is PL homeomorphic to $\mathcal{C}(\Delta)$, which is a closed $2$-disk.

\end{example}
\section{Application to Degenerations of $I$-surfaces}\label{FPRexample}
The modular interpretation of the boundary strata of the moduli space $\mathcal{M}_g$ of curves of genus $g>2$ is well understood. The singular curves appearing in the boundary are exactly the connected stable curves with nodal singularities. Every component of geometric genus $0$ must have at least three nodal points, and every component of geometric genus $1$ must have at least one nodal point.

In higher dimensions little is known about the boundary strata of the KSBA moduli spaces. While every stable curve may be degenerated to a smooth curve, this is no longer true even for semi-log canonical surfaces with ample log canonical divisor. Thus it is an open problem to describe which singular surfaces appear as limits of smooth surfaces of a given topological type.

Consider a smooth surface $S$ with $p_g=2$, $q=0$ and $K^2=1$, called an I-surface. It is a classical result that such a surface can be realized as a degree $10$ hypersurface in the weighted projective space $\mathbb{P}(1,1,2,5)$. Equivalently $S$ is the double cover of a singular quadric cone $Q \subset \mathbb{P}^3$, branched along the intersection of $Q$ with a degree $5$ hypersurface. In a series of papers Franciosi, Pardini, and Rollenske \cite{FPR2015a,FPR2015b,FPR2017} give an account of the stable surfaces appearing in this moduli space.

We consider one of their examples \cite{FPR2015a}. As stable surfaces are semi-log canonical, they can be recovered from the data of a log canonical pair $(D,B)$ along with an involution on the normalization of $B$ preserving the log canonical centers. Take $D=\mathbb{P}^2$, and $B$ the be the union of $4$ lines in general position, numbered $L_1, \ldots L_4$. The involution consists of a pair of isomorphisms, without loss of generality between $L_1$ and $L_2$ and between $L_3$ and $L_4$. An isomorphism between smooth rational curves is determined uniquely by the choice of the image of three points. Hence the data of the involution consists of a pair of bijections between three element sets. For the surface $X_{3,1}$, we take

\[
\phi_{12}= \begin{bmatrix} P_{12} & P_{13} & P_{14} \\
P_{21} & P_{24} & P_{23} \\ \end{bmatrix}; \phi_{34} = \begin{bmatrix} P_{31} & P_{32} & P_{34}\\
P_{41} & P_{42} & P_{43} \\ \end{bmatrix}.
\]

The choice of $4$ lines on $\mathbb{P}^2$ guarantees that $K^2=1$ for the resulting stable surface. $X_{3,1}$ is the unique such surface up to isomorphism satisfying $q=0$ and $p_g=2$. In fact, $X_{3,1}$ embeds as a hypersurface of degree $10$ in $\mathbb{P}(1,1,2,5)$, and hence can be seen as a limit of smooth I-surfaces \cite[Theorem 3.3]{FPR2017}.

\begin{example}
Let $X \to B$ be a semistable $1$-parameter degeneration of I-surfaces with special fiber $X_0 \cong X_{3,1}$. The dual complex $\mathcal{C}(X_0)$ is the union of two $2$-spheres, glued along the union of two disks, and is homotopy equivalent to the wedge of two $2$-spheres.
\end{example}

Two constructions of this complex are given in Figure \ref{fig:FPR}.
By Theorem \ref{dmr}, the complex $\mathcal{C}(X_0)$ is PL homeomorphic to the dual complex of log canonical centers of any dlt minimal model of $(X,X_0)$.

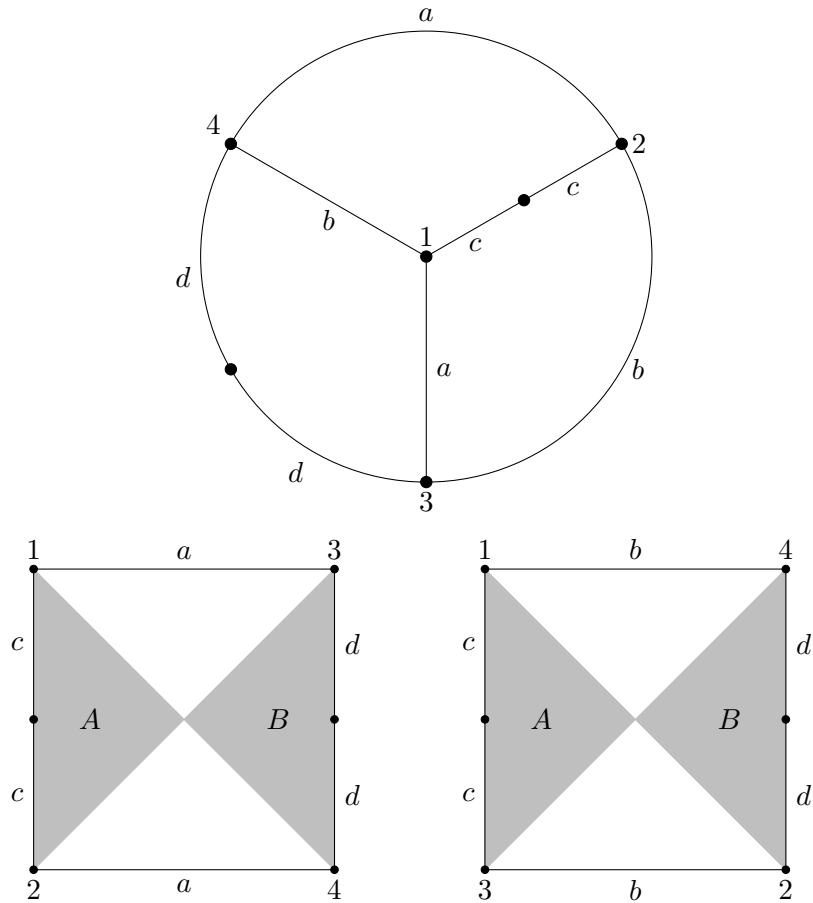
\begin{figure}
\begin{center}

\begin{tikzpicture}[scale=3]
\coordinate (1) at (0, 0);
\coordinate (2) at (0,-1);
\coordinate (3) at (0.86602, 0.5);
\coordinate (4) at (-0.86602, 0.5);

\coordinate (14) at (-0.86602/2, 0.25);
\coordinate (12) at (0, -.5);
\coordinate (13) at (0.86602/2, 0.25);
\coordinate (23) at (0.86602, -.5);
\coordinate (24) at (-0.86602, -0.5);
\coordinate (34) at (0, 1);

\coordinate (114) at (-0.86602/4, 0.25/2);
\coordinate (144) at (-3*0.86602/4, 3*0.25/2);
\coordinate (112) at (0, -.25);
\coordinate (122) at (0, -.75);
\coordinate (113) at (0.86602/4, 0.25/2);
\coordinate (133) at (3*0.86602/4, 0.75/2);
\coordinate (223) at (0.5,-0.86602);
\coordinate (233) at (1,0);
\coordinate (224) at (-.5,-0.86602);
\coordinate (244) at (-1,0);
\coordinate (334) at (0.5,0.86602);
\coordinate (344) at (-.5,0.86602);

\draw (1)--(2);
\draw (1)--(4);
\draw (1)--(3);
\draw (1) circle (1);

\draw (1) node[anchor=south] {$1$};
\draw (2) node[anchor=north] {$3$};
\draw (3) node[anchor=west] {$2$};
\draw (4) node[anchor=south east] {$4$};
\draw (12) node[anchor=west] {$a$};
\draw (113) node[anchor=north] {$c$};
\draw (133) node[anchor=north] {$c$};
\draw (14) node[anchor=north] {$b$};
\draw (23) node[anchor=west] {$b$};
\draw (34) node[anchor=south] {$a$};
\draw (244) node[anchor=north east] {$d$};
\draw (224) node[anchor=north east] {$d$};

\filldraw [black] 
(1) circle (0.025)
(2) circle (0.025)
(3) circle (0.025)
(4) circle (0.025)
(13) circle (0.025)
(24) circle (0.025);

\end{tikzpicture}
\end{center}

\begin{center}
\begin{tikzpicture}
\coordinate (50) at (-3, 2);
\coordinate (51) at (-5,4);
\coordinate (53) at (-5,0);
\coordinate (52) at (-1, 4);
\coordinate (54) at (-1, 0);
\coordinate (5013) at (-4.25, 2);
\coordinate (5024) at (-1.75, 2);
\coordinate (513) at (-5,2);
\coordinate (524) at (-1,2);
\coordinate (512) at (-3, 4);
\coordinate (534) at (-3, 0);
\coordinate (5224) at (-1,3);
\coordinate (5244) at (-1,1);
\coordinate (5113) at (-5,1);
\coordinate (5133) at (-5,3);

\filldraw[fill=gray!50!white, draw=gray!50!white] (50)--(51)--(53)--(50);
\filldraw[fill=gray!50!white, draw=gray!50!white] (50)--(52)--(54)--(50);

\draw (5013) node {$A$};
\draw (5024) node {$B$};

\filldraw [black]
(51) circle (0.05)
(52) circle (0.05)
(53) circle (0.05)
(54) circle (0.05)
(524) circle (0.05)
(513) circle (0.05);

\draw (51)--(52);
\draw (51)--(53);
\draw (52)--(54);
\draw (53)--(54);

\draw (51) node[anchor=south] {$1$};
\draw (52) node[anchor=south] {$3$};
\draw (53) node[anchor=north] {$2$};
\draw (54) node[anchor=north] {$4$};

\draw (512) node [anchor=south] {$a$};
\draw (534) node [anchor=north] {$a$};
\draw (5224) node [anchor=west] {$d$};
\draw (5244) node [anchor=west] {$d$};
\draw (5113) node [anchor=east] {$c$};
\draw (5133) node [anchor=east] {$c$};

\coordinate (60) at (3,2);
\coordinate (64) at (5,4);
\coordinate (62) at (5,0);
\coordinate (61) at (1, 4);
\coordinate (63) at (1, 0);
\coordinate (6024) at (4.25, 2);
\coordinate (6013) at (1.75, 2);
\coordinate (624) at (5,2);
\coordinate (613) at (1,2);
\coordinate (612) at (3, 4);
\coordinate (634) at (3, 0);
\coordinate (6224) at (5,3);
\coordinate (6244) at (5,1);
\coordinate (6113) at (1,1);
\coordinate (6133) at (1,3);

\filldraw[fill=gray!50!white, draw=gray!50!white] (60)--(61)--(63)--(60);
\filldraw[fill=gray!50!white, draw=gray!50!white] (60)--(62)--(64)--(60);

\draw (6013) node {$A$};
\draw (6024) node {$B$};

\filldraw [black]
(61) circle (0.05)
(62) circle (0.05)
(63) circle (0.05)
(64) circle (0.05)
(624) circle (0.05)
(613) circle (0.05);

\draw (61)--(63);
\draw (61)--(64);
\draw (62)--(64);
\draw (63)--(62);

\draw (612) node [anchor=south] {$b$};
\draw (634) node [anchor=north] {$b$};
\draw (6224) node [anchor=west] {$d$};
\draw (6244) node [anchor=west] {$d$};
\draw (6113) node [anchor=east] {$c$};
\draw (6133) node [anchor=east] {$c$};

\draw (61) node[anchor=south] {$1$};
\draw (62) node[anchor=north] {$2$};
\draw (63) node[anchor=north] {$3$};
\draw (64) node[anchor=south] {$4$};
\end{tikzpicture}
\end{center}
\caption{Above: The complex $\mathcal{C}(X_{3,1})$ is obtained by taking the cone over the graph $K_4$ and identifying points and edges of the resulting complex according to the labeling. Below: The same complex can be seen as the union of two spheres $S^2$ glued along the labeled shaded areas.}
\label{fig:FPR}
\end{figure}

\bibliographystyle{amsalpha}      
\bibliography{References}
\end{document}